 \newtheorem{Theorem}{Theorem}[section]
 \newtheorem{Lemma}{Lemma}[section]
 \newtheorem{Proposition}{Proposition}[section]
 \newtheorem{Definition}{Definition}[section]
\newtheorem{Question}{Question}[section]
 \newtheorem{Remark}{Remark}[section]
 \numberwithin{equation}{section}
\begin{document}

\title[Decreasing equisingular approximations with analytic singularities]
 {Decreasing equisingular approximations with analytic singularities}

\author{Qi'an Guan}
\address{Qi'an Guan: School of Mathematical Sciences,
Peking University, Beijing, 100871, China.}
\email{guanqian@math.pku.edu.cn}

\thanks{}

\subjclass[2010]{32D15, 32E10, 32L10, 32U05}

\keywords{multiplier ideal sheaf, plurisubharmonic function, equisingular approximation, analytic singularity}

\date{\today}

\dedicatory{}

\commby{}


\begin{abstract}
In this note, for the multiplier ideal sheaves with weights $\log\sum_{i}|z_{i}|^{a_{i}}$,
we present the sufficient and necessary condition of the existence of
decreasing equisingular approximations with analytic singularities.
\end{abstract}

\maketitle

\section{Introduction}

The multiplier ideal sheaf related to a plurisubharmonic function plays an important role in complex geometry and algebraic geometry,
which was widely discussed
(see e.g. \cite{tian87,Nadel90,siu96,DEL00,D-K01,demailly-note2000,D-P03,Laz04,siu05,siu09,demailly2010}).
We recall the definition as follows.

\emph{Let $\varphi$ be a plurisubharmonic function (see \cite{demailly-book,Si,siu74}) on a complex manifold.
It is known that the multiplier ideal sheaf $\mathcal{I}(\varphi)$ was defined as the sheaf of germs of holomorphic functions $f$ such that
$|f|^{2}e^{-2\varphi}$ is locally integrable (see \cite{demailly2010}).}

Recall that a weight was called with analytic singularity near $o$ if
$$\varphi=c\log\sum_{j=1}^{N}|f_{j}|^{2}+O(1)$$
near $o$, where $\{f_{j}\}_{j=1}^{N}$ are holomorphic functions near $o$ and $c\in\mathbb{R}^{+}$.

Demailly established the decreasing equisingular approximations $\varphi_{m}$ of weight $\varphi$, which are smooth outside analytic subvarieties (see \cite{demailly2010}),
where "equisingular" means $\mathcal{I}(\varphi)=\mathcal{I}(\varphi_m)$ holds for any $m$.
Then it was asked: can one choose the decreasing equisingular approximations with analytic singularities (see \cite{demailly2010})?

In \cite{guan-approx}, by constructing an example and using
the sharp lower bound of log canonical threshold \cite{D-P14} to check the example,
a negative answer to the above question was presented.
Furthermore, it is natural to ask

\begin{Question}
\label{Q:approx}
For a (given) class of weights,
can one give a characterization of the weights which have
decreasing equisingular approximations with analytic singularities?
\end{Question}

Let $(z_{1},\cdots,z_{n})$ be the coordinates on $\mathbb{C}^{n}$.
In this article,
we answer Question \ref{Q:approx} for the class of weights $\{\log\sum_{i=1}^{m}|z_{i}|^{a_{i}}:$
$m\leq n$ and $a_{i}>0$ for any $1\leq i\leq m\}$.

\begin{Theorem}
\label{thm:approx}
The weight $\varphi=\log\sum_{i=1}^{m}|z_{i}|^{a_{i}}$ has
decreasing equisingular approximations with analytic singularities near $o$ if and only if one of the following statements holds

$(1)$ $\varphi$ has analytic singularity near $o$, i.e.
there exists $c\in\mathbb{R}^{+}$ such that $\frac{a_{i}}{c}\in \mathbb{Q}^{+}$ for any $i\in\{1,\cdots,m\}$;

$(2)$ the equation $\sum_{i=1}^{m}\frac{x_{i}}{a_{i}}=1$ has no positive integer solutions.
\end{Theorem}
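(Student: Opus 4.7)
\emph{Sufficiency.} In case~(1), writing $a_i = c p_i/q$ with $p_i, q\in \mathbb{Z}_{>0}$ and using the norm equivalence $\sum_i|w_i|^{c/q} \asymp (\sum_i|w_i|^2)^{c/(2q)}$ on $\mathbb{C}^m$, the substitution $w_i = z_i^{p_i}$ yields $\varphi = \frac{c}{2q}\log\sum_{i=1}^m |z_i^{p_i}|^2 + O(1)$ near $o$, so $\varphi$ itself has analytic singularity and I take $\varphi_m \equiv \varphi$. In case~(2), I set $\varphi_m := \log\sum_{i=1}^m|z_i|^{b_{i,m}}$ with rationals $b_{i,m} \nearrow a_i$; each $\varphi_m$ has analytic singularity by case~(1), and $b_{i,m} \leq b_{i,m+1} \leq a_i$ together with $|z_i|<1$ near $o$ gives $\varphi_m\searrow\varphi$. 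Equisingularity reduces, via the monomial ideal formula $\mathcal{I}(\log\sum|z_i|^{b_i}) = \{z^\alpha:\sum_i(\alpha_i+1)/b_i>1\}$, to checking $\sum_i(\alpha_i+1)/b_{i,m} < 1$ on the finite set $S:=\{\alpha \in \mathbb{Z}_{\geq 0}^m : \sum_i(\alpha_i+1)/a_i \leq 1\}$; hypothesis~(2) makes this inequality strict on $S$ at $b_{i,m}=a_i$, and the strictness persists when each $b_{i,m}$ is chosen close enough to $a_i$.

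\emph{Necessity.} Assume both (1) and (2) fail, and suppose for contradiction that $\varphi_m$ as in the theorem exist. Fix $\beta\in \mathbb{Z}_{>0}^m$ with $\sum\beta_i/a_i = 1$ and let $g := z_1^{\beta_1-1}\cdots z_m^{\beta_m-1}$. Direct integration gives $c(g,\varphi) = 1$ and $g \notin \mathcal{I}(\varphi)$. Equisingularity transports this to $g\notin \mathcal{I}(\varphi_m)$, while $\varphi_m \geq \varphi$ together with $g \in \mathcal{I}((1-\epsilon)\varphi)$ for all $\epsilon>0$ transports $g \in \mathcal{I}((1-\epsilon)\varphi_m)$; hence $c(g,\varphi_m) = 1$ exactly for every $m$.

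Now exploit the analytic-singularity presentation $\varphi_m = c_m \log\sum_j|f_{j,m}|^2 + O(1)$. On a log resolution of $(f_{j,m})_j$, the usual formula expresses $c(g,\varphi_m)$ as a minimum of rational quantities $(v_i(g) + 1 + \kappa_i)/(c_m k_i)$ over exceptional divisors; forcing this minimum to equal $1$ pins down $c_m \in \mathbb{Q}_{>0}$. I then reduce to the toric case: since $\mathcal{I}(\varphi_m) = \mathcal{I}(\varphi)$ is already a monomial ideal, replacing $(f_{j,m})_j$ by its monomialization (the largest monomial ideal with the same integral closure) leaves the multiplier ideal unchanged, producing a toric presentation $\varphi_m = c_m \log\sum_\gamma|z^\gamma|^2 + O(1)$ to which Howald's formula applies. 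Combined with $\varphi_m \geq \varphi$, this yields a containment of rational polyhedral characteristic regions $R_m \supseteq \{x \geq 0: \sum_i x_i/a_i \geq 1\}$ with coinciding interior lattice points. The critical $\beta$ is on the boundary of the right-hand side; it is either interior to $R_m$---placing $g$ in $\mathcal{I}(\varphi_m)$, contradicting the previous paragraph---or on a facet of $R_m$. In the latter case, convexity together with the half-space containment at $\beta$ forces the facet's outward normal to be a positive multiple of $(1/a_1,\ldots,1/a_m)$; since $R_m$ is rational, so is this normal, making the $a_i$ pairwise rational multiples, i.e.\ condition~(1), contradicting our standing assumption.

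\emph{Main obstacle.} The delicate step is the monomialization/toric reduction on the necessity side: one must produce monomial generators yielding the same multiplier ideal as $(f_{j,m})_j$ while retaining the $c\log\sum|f_j|^2+O(1)$ form of $\varphi_m$. Naive $T^m$-averaging keeps psh-ness, monotonicity, and the monomial multiplier ideal, but not a priori the analytic-singularity presentation, so the argument has to use essentially the fact that $\mathcal{I}(\varphi_m) = \mathcal{I}(\varphi)$ is already monomial to justify the replacement.
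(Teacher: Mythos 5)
Your sufficiency argument is correct and is essentially the paper's own: approximate the exponents by rationals $b_{i,m}\nearrow a_{i}$, use the monomial description of $\mathcal{I}(\log\sum|z_{i}|^{b_{i}})_{o}$, and note that hypothesis $(2)$ makes $\sum_{i}(\alpha_{i}+1)/a_{i}<1$ strict on the finite set $S$, so the ideal is unchanged once the $b_{i,m}$ are close enough (case $(1)$ being trivial). The necessity direction, however, has a genuine gap at exactly the step you flag as the ``main obstacle'': the monomialization/toric reduction. The fact that $\mathcal{I}(\varphi_{m})_{o}=\mathcal{I}(\varphi)_{o}$ is a monomial ideal does \emph{not} imply that $\varphi_{m}$ is, up to $O(1)$, of the form $c_{m}\log\sum_{\gamma}|z^{\gamma}|^{2}$, nor that the ideal $(f_{j,m})_{j}$ has monomial integral closure; for instance $\log|z_{1}-z_{2}|$ on $\mathbb{C}^{2}$ has trivial (hence monomial) multiplier ideal but is not $O(1)$-equivalent to any toric weight, so ``the largest monomial ideal with the same integral closure'' is not available in general. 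Torus-averaging, as you note, preserves $\varphi_{m}\ge\varphi$ and the monomial multiplier ideal but destroys the analytic-singularity presentation. Your concluding rigidity argument needs much more than equality of $\mathcal{I}(\varphi_{m})_{o}$ at the single exponent $1$: it needs the full rational polyhedral region $R_{m}$ controlling $\mathcal{I}((1-\epsilon)\varphi_{m})_{o}$ for all small $\epsilon$ (Howald's formula) together with the containment $R_{m}\supseteq\{x\ge0:\sum_{i}x_{i}/a_{i}\ge1\}$ encoding $\varphi_{m}\ge\varphi$, and none of this is justified once the presentation of $\varphi_{m}$ is replaced. So as written the necessity proof does not go through.

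For comparison, the paper closes the necessity without any resolution or toric reduction. If $\sum_{i}x_{i}/a_{i}=1$ has a positive integer solution $(\alpha_{1}+1,\dots,\alpha_{m}+1)$, then the minimal $L^{2}$ integral $G(-\log r)=C_{z^{\alpha},\mathcal{I}(\varphi)_{o}}(\{\varphi<\frac{1}{2}\log r\}\cap V_{r_{0}})$ equals $\mathrm{const}\cdot r^{\sum_{i}(\alpha_{i}+1)/a_{i}}=r\,G(0)$, i.e.\ is exactly linear in $r$; the equality case of the concavity of $G$ from \cite{guan-effect} then forces rigidity: any plurisubharmonic $\varphi'\ge\varphi$ with $\mathcal{I}(\varphi')_{o}=\mathcal{I}(\varphi)_{o}$ must coincide with $\varphi$, so $\varphi$ is a maximal equisingular weight. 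Consequently any decreasing equisingular approximation $\varphi_{m}\ge\varphi$ equals $\varphi+O(1)$, and analytic singularities of $\varphi_{m}$ would give analytic singularity of $\varphi$ itself, contradicting the failure of $(1)$. If you wish to keep your route you must actually prove the toric reduction for $\varphi_{m}$; otherwise this maximality/rigidity argument is the missing ingredient your proposal needs.
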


Considering the complex lines through $o$, one can obtain the equivalence between "$\varphi$ has analytic singularity near $o$" and "there exists $c\in\mathbb{R}^{+}$ such that $\frac{a_{i}}{c}\in \mathbb{Q}^{+}$ for any $i\in\{1,\cdots,m\}$" in Theorem \ref{thm:approx}.

\begin{Remark}
Note that if $c\not\in\mathbb{Q}^{+}$, then statement $(2)$ in Theorem \ref{thm:approx} holds.
Then statement $(1)$ in Theorem \ref{thm:approx} can be replaced by

"$(1')$ $a_{i}\in \mathbb{Q}^{+}$ for any $i\in\{1,\cdots,m\}$;".
\end{Remark}

\section{Preparations}

In the present section, we recall and present some results which will be used to prove Theorem \ref{thm:approx}.

\subsection{The multiplier ideal with weight $\log\max_{1\leq i\leq m}|z_{i}|^{a_{i}}$}

\

Let $\varphi=\log\max_{1\leq i\leq m}|z_{i}|^{a_{i}}$, where $m\leq n$ and $a_{i}>0$ for any $1\leq i\leq m$.
Let $V_{r_{0}}=\{\max\{|z_{m+1}|,\cdots,|z_{n}|\}<\frac{1}{2}\log r_{0}\}$.
\begin{Lemma}
\label{lem:torus}
Let $f=\sum_{\alpha}b_{\alpha}z^{\alpha}$ (Taylor expansion) be a holomorphic function on a neighborhood $U\ni o$, such that $|f|^{2}e^{-2\varphi}$ is integrable on $U$.
Then for any $\alpha$,
$$\int_{\{\varphi<\frac{1}{2}\log r\}\cap V_{r_{0}}}|f|^{2}e^{-2\varphi}
\geq\int_{\{\varphi<\frac{1}{2}\log r\}\cap V_{r_{0}}}|b_{\alpha}z^{\alpha}|^{2}e^{-2\varphi}$$
holds for any $r>0$ and $r_{0}>0$ small enough such that $\{\varphi<\frac{1}{2}\log r\}\cap V_{r_{0}}\subset U$,
which implies that
$|a_{\alpha}z^{\alpha}|^{2}e^{-2\varphi}$ is integrable near $o$ for any $\alpha$.
\end{Lemma}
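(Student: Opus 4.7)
The plan is to exploit the Reinhardt (toric) symmetry of both the weight and the domain of integration. Since $\varphi=\log\max_{1\leq i\leq m}|z_i|^{a_i}$ depends only on $|z_1|,\dots,|z_m|$ while $V_{r_0}$ depends only on $|z_{m+1}|,\dots,|z_n|$, the set $\Omega_{r,r_0}:=\{\varphi<\tfrac12\log r\}\cap V_{r_0}$ is in fact a polydisc centred at $o$, hence a Reinhardt domain invariant under the real $n$-torus $T^n$ acting by $z_j\mapsto e^{i\theta_j}z_j$. For $r,r_0$ small enough so that $\Omega_{r,r_0}\subset U$, the Taylor expansion $f=\sum_\alpha b_\alpha z^\alpha$ converges absolutely and uniformly on every compact sub-polydisc of $\Omega_{r,r_0}$.

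The main step is a standard orthogonality-of-monomials computation. I would exhaust $\Omega_{r,r_0}$ by an increasing family of $T^n$-invariant compact sub-polydiscs $K_\varepsilon\nearrow\Omega_{r,r_0}$ (obtained by shrinking each multiradius by $\varepsilon$). On $K_\varepsilon$ the double series
$$|f|^2=\sum_{\alpha,\beta}b_\alpha\overline{b_\beta}\,z^\alpha\overline{z^\beta}$$
converges uniformly, so one may swap this sum with the integral $\int_{K_\varepsilon}(\cdot)\,e^{-2\varphi}\,dV$. Passing to polar coordinates $z_j=\rho_j e^{i\theta_j}$, both $e^{-2\varphi}$ and $\mathbf 1_{K_\varepsilon}$ depend only on $\rho$, so the orthogonality
$$\int_{[0,2\pi]^n}e^{i\langle\alpha-\beta,\theta\rangle}\,d\theta=(2\pi)^n\delta_{\alpha\beta}$$
eliminates the off-diagonal terms and yields
$$\int_{K_\varepsilon}|f|^2e^{-2\varphi}\,dV=\sum_\alpha\int_{K_\varepsilon}|b_\alpha z^\alpha|^2e^{-2\varphi}\,dV.$$
Since each summand is nonnegative, dropping all but a chosen $\alpha$ gives the desired inequality on $K_\varepsilon$; monotone convergence as $\varepsilon\to 0^+$ then transfers it to $\Omega_{r,r_0}$.

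The integrability claim is automatic from the inequality: for any fixed $\alpha$ one has $\int_{\Omega_{r,r_0}}|b_\alpha z^\alpha|^2e^{-2\varphi}\,dV\leq\int_U|f|^2e^{-2\varphi}\,dV<\infty$, and $\Omega_{r,r_0}$ is itself a neighborhood of $o$. The only real subtlety is choosing the exhausting family so that it is simultaneously $T^n$-invariant and compactly contained in $\Omega_{r,r_0}$; once that is arranged, everything reduces to Fubini, uniform convergence of the Taylor series, and orthogonality of characters, so I do not expect any serious obstacle beyond this bookkeeping.
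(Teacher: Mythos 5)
Your overall route is the intended one: the paper states this lemma without proof, and the standard argument it relies on is exactly the Reinhardt/torus symmetry of $\{\varphi<\frac12\log r\}\cap V_{r_0}$ together with orthogonality of monomials against the torus-invariant weight $e^{-2\varphi}$. However, there is a genuine gap in the one analytic step you lean on: you justify swapping $\sum_{\alpha,\beta}$ with $\int_{K_\varepsilon}(\cdot)\,e^{-2\varphi}\,dV$ by uniform convergence of the double series on the compact sub-polydisc $K_\varepsilon$. Uniform convergence only licenses term-by-term integration against a \emph{finite} measure, and here $e^{-2\varphi}\,dV$ need not be finite on $K_\varepsilon$: since $e^{-2\varphi}=(\max_{1\le i\le m}|z_i|^{a_i})^{-2}$ blows up along $\{z_1=\cdots=z_m=0\}$, which lies inside every such $K_\varepsilon$, one has $\int_{K_\varepsilon}e^{-2\varphi}\,dV=+\infty$ whenever $\sum_{i=1}^m\frac1{a_i}\le 1$ (compare Lemma \ref{lem:ideal} with $\alpha=0$). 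That is precisely the range of exponents relevant to the paper, so the interchange as you argue it fails exactly in the cases of interest; moreover the lemma's force comes from the identity holding in $[0,+\infty]$ (an infinite term on the right with $b_\alpha\neq0$ must force the left side infinite), which a uniform-convergence argument cannot deliver.

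The repair is short and stays within your framework. Either (i) do the angular integration first: for each fixed modulus vector $\rho$ with $\rho e^{i\theta}\in\Omega_{r,r_0}$, Parseval for the continuous function $\theta\mapsto f(\rho e^{i\theta})$ gives $\frac1{(2\pi)^n}\int_{[0,2\pi]^n}|f(\rho e^{i\theta})|^2\,d\theta=\sum_\alpha|b_\alpha|^2\rho^{2\alpha}$; multiplying by $e^{-2\varphi(\rho)}\prod_j\rho_j$ (which depends only on $\rho$, and the domain is Reinhardt) and integrating in $\rho$ by Tonelli — everything is nonnegative — yields $\int_{\Omega_{r,r_0}}|f|^2e^{-2\varphi}=\sum_\alpha|b_\alpha|^2\int_{\Omega_{r,r_0}}|z^\alpha|^2e^{-2\varphi}$ as an identity in $[0,+\infty]$, from which your conclusion follows by dropping all terms but one. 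Or (ii) keep your exhaustion but additionally excise the torus-invariant set $\{\max_{1\le i\le m}|z_i|^{a_i}<\delta\}$, on which complement the weight is bounded by $\delta^{-2}$, so your interchange is valid there; then let $\delta\to0^+$ and $\varepsilon\to0^+$ by monotone convergence. With either fix, the remainder of your argument, including the integrability statement, goes through.
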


It follows from Lemma \ref{lem:torus} that the following lemma holds.

\begin{Lemma}
\label{lem:torus_compara}
The following two statements are equivalent

(1) $\mathcal{I}(t_{1}\varphi)_{o}=\mathcal{I}(t_{2}\varphi)_{o}$;

(2) $\mathcal{I}(t_{1}\varphi)_{o}\cap\{(z^{\alpha},o)\}_{\alpha\in\mathbb{N}^{n}}=\mathcal{I}(t_{2}\varphi)_{o}\cap\{(z^{\alpha},o)\}_{\alpha\in\mathbb{N}^{n}}$.
\end{Lemma}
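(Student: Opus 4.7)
The plan is to use Lemma~\ref{lem:torus} to reduce equality of the ideals to equality on monomials. The direction $(1)\Rightarrow(2)$ is immediate by intersecting both sides with the set of monomial germs, so the real content is $(2)\Rightarrow(1)$, which I would establish by proving each inclusion in turn.

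Take $f\in\mathcal{I}(t_{1}\varphi)_o$ with Taylor expansion $f=\sum_{\alpha}b_{\alpha}z^{\alpha}$. Applying Lemma~\ref{lem:torus} with weight $t_{1}\varphi$ in place of $\varphi$, each $|b_{\alpha}z^{\alpha}|^{2}e^{-2t_{1}\varphi}$ is locally integrable at $o$, so the monomial germ $(z^{\alpha},o)$ lies in $\mathcal{I}(t_{1}\varphi)_o$ whenever $b_{\alpha}\ne 0$. Hypothesis $(2)$ then places each such germ inside $\mathcal{I}(t_{2}\varphi)_o$, and hence every finite partial Taylor sum $f_{N}:=\sum_{|\alpha|\le N}b_{\alpha}z^{\alpha}$ lies in $\mathcal{I}(t_{2}\varphi)_o$, which is closed under $\mathbb{C}$-linear combinations.

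The only step I expect to require care is passing from the finite sums $f_{N}$ to the convergent germ $f$. For this I would note that $f-f_{N}\in\mathfrak{m}_o^{N+1}$, where $\mathfrak{m}_o$ is the maximal ideal of $\mathcal{O}_{\mathbb{C}^{n},o}$, so $f\in\mathcal{I}(t_{2}\varphi)_o+\mathfrak{m}_o^{N+1}$ for every $N$. Krull's intersection theorem, applied to the Noetherian local ring $\mathcal{O}_{\mathbb{C}^{n},o}/\mathcal{I}(t_{2}\varphi)_o$, forces $\bigcap_{N}\bigl(\mathcal{I}(t_{2}\varphi)_o+\mathfrak{m}_o^{N+1}\bigr)=\mathcal{I}(t_{2}\varphi)_o$, giving $f\in\mathcal{I}(t_{2}\varphi)_o$. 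Interchanging $t_{1}$ and $t_{2}$ produces the reverse inclusion. Conceptually, Lemma~\ref{lem:torus} is saying that $\mathcal{I}(t\varphi)_o$ is a monomial ideal, and the $\mathfrak{m}_o$-adic closedness step is just the standard mechanism by which such an ideal is pinned down by the monomials it contains — this is the only place the argument goes beyond a one-line invocation of Lemma~\ref{lem:torus}.
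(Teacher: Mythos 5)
Your proof is correct and takes essentially the same route the paper intends: the paper gives no argument beyond asserting that the lemma ``follows from Lemma \ref{lem:torus}'', i.e.\ reduction to the monomial germs appearing in the Taylor expansion, which is exactly your scheme. Your Krull-intersection step (passing from the finite partial sums in $\mathcal{I}(t_{2}\varphi)_{o}$ to the full series via closedness of ideals in the $\mathfrak{m}_o$-adic topology of the Noetherian local ring $\mathcal{O}_{\mathbb{C}^n,o}$) is a correct filling-in of the detail the paper leaves implicit.
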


There is a standard calculation as follows
\begin{equation}
\label{equ:standard_int}
\begin{split}
&\int_{\{\varphi<\frac{1}{2}\log r\}\cap V_{r_{0}}}|z_{1}^{\alpha_{1}}\cdots z_{n}^{\alpha_{n}}|^{2}
\\=&
\int_{\{|z_{1}|<r^{\frac{1}{2a_{1}}}\}\cap\cdots\cap\{|z_{m}|<r^{\frac{1}{2a_{m}}}\}\cap V_{r_{0}}}|z_{1}^{\alpha_{1}}\cdots z_{n}^{\alpha_{n}}|^{2}
\\=&
\int_{\{|z_{1}|<r^{\frac{1}{2a_{1}}}\}\cap\cdots\cap\{|z_{m}|<r^{\frac{1}{2a_{m}}}\}}|z_{1}^{\alpha_{1}}\cdots z_{m}^{\alpha_{m}}|^{2}
\\&\times\int_{V_{r_{0}}}|z_{m+1}^{\alpha_{m+1}}\cdots z_{n}^{\alpha_{n}}|^{2}
\\=&\pi^{m}\frac{r^{\sum_{i=1}^{m}\frac{\alpha_{i}+1}{a_{i}}}}{\prod_{i=1}^{m}(\alpha_{i}+1)}\int_{V_{r_{0}}}|z_{m+1}^{\alpha_{m+1}}\cdots z_{n}^{\alpha_{n}}|^{2},
\end{split}
\end{equation}
which implies the following lemma.
\begin{Lemma}
\label{lem:ideal}
$(z_{1}^{\alpha_{1}}\cdots z_{n}^{\alpha_{n}},o)\in \mathcal{I}(\varphi)_{o}$
if and only if
$\sum_{i=1}^{m}\frac{\alpha_{i}+1}{a_{i}}>1$.
\end{Lemma}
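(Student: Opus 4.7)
The plan is to translate $(z^{\alpha},o)\in\mathcal{I}(\varphi)_{o}$ into local $L^{1}$ integrability of $|z^{\alpha}|^{2}e^{-2\varphi}$ on a neighborhood of $o$, and then extract the convergence condition from (\ref{equ:standard_int}) via a dyadic decomposition of that neighborhood in the sub-level sets of $\varphi$.

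First I would fix a small product neighborhood of the form
$$W:=\{\max_{1\le i\le m}|z_{i}|^{a_{i}}<\epsilon\}\cap V_{r_{0}}$$
and partition it dyadically according to $\varphi$,
$$W=\bigsqcup_{k\ge 0}W_{k},\qquad W_{k}:=W\cap\bigl\{\tfrac{1}{2}\log(2^{-k-1}\epsilon^{2})\le\varphi<\tfrac{1}{2}\log(2^{-k}\epsilon^{2})\bigr\}.$$
On each $W_{k}$ the weight satisfies $e^{-2\varphi}\asymp 2^{k}\epsilon^{-2}$, so membership in $\mathcal{I}(\varphi)_{o}$ becomes equivalent to the finiteness of $\sum_{k\ge 0}2^{k}\int_{W_{k}}|z^{\alpha}|^{2}\,dV$.

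Second, I would apply (\ref{equ:standard_int}) with $r=2^{-k}\epsilon^{2}$ to obtain
$$\int_{\{\varphi<\frac{1}{2}\log r\}\cap V_{r_{0}}}|z^{\alpha}|^{2}=C\cdot r^{\sigma},\qquad \sigma:=\sum_{i=1}^{m}\frac{\alpha_{i}+1}{a_{i}},$$
where $C>0$ absorbs the positive finite factor $\int_{V_{r_{0}}}|z_{m+1}^{\alpha_{m+1}}\cdots z_{n}^{\alpha_{n}}|^{2}$. Subtracting consecutive levels gives $\int_{W_{k}}|z^{\alpha}|^{2}\asymp 2^{-k\sigma}$, so the series above is a positive multiple of $\sum_{k\ge 0}2^{k(1-\sigma)}$, which converges if and only if $\sigma>1$.

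The two-sided comparison of $e^{-2\varphi}$ on each $W_{k}$ supplies both directions at once: integrability when $\sigma>1$, and divergence on every neighborhood of $o$ when $\sigma\le 1$ (since the series lower bound survives on any smaller $W$). I do not anticipate any serious obstacle; the only mild care required is to check that the transverse factor coming from $z_{m+1},\ldots,z_{n}$ is a harmless positive constant and that the dyadic pieces $W_{k}$ for $k\ge 0$ cover $W$ up to a null set, so that nothing outside $W$ affects the convergence threshold.
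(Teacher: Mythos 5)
Your argument is correct and is essentially the paper's own route: the paper deduces the lemma directly from the sublevel-set volume formula (\ref{equ:standard_int}), and your dyadic decomposition of $\{\varphi<\frac{1}{2}\log\epsilon^{2}\}\cap V_{r_{0}}$ with $e^{-2\varphi}\asymp 2^{k}\epsilon^{-2}$ on each shell is just the standard way of making that implication explicit, yielding the threshold $\sum_{i=1}^{m}\frac{\alpha_{i}+1}{a_{i}}>1$ in both directions. No gaps.
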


\begin{Remark}
\label{rem:ideal}
Assume that the equation $\sum_{i=1}^{m}\frac{x_{i}}{a_{i}}=1$ has no integer solutions.
Then there exists $\varepsilon_{0}>0$, such that for any $\varepsilon\in(0,\varepsilon_{0}]$
\begin{equation}
\label{equ:acc}
\mathcal{I}((1-\varepsilon)\varphi)_{o}=\mathcal{I}(\varphi)_{o}
\end{equation}
holds.
\end{Remark}

\begin{proof}
Note that there for any $a_{i}>0$ for any $1\leq i\leq m$,
there exist finite $\alpha\in\mathbb{N}^{m}$,
such that $\sum_{i=1}^{m}\frac{\alpha_{i}+1}{a_{i}}\leq 1$.
As the equation $\sum_{i=1}^{m}\frac{x_{i}}{a_{i}}=1$ has no integer solutions,
then there exists $\varepsilon_{0}>0$,
such that for any $\varepsilon\in(0,\varepsilon_{0}]$, the equality
\begin{equation}
\{(x_{i})_{1\leq i\leq m}:\sum_{i=1}^{m}\frac{x_{i}}{a_{i}}>1-\varepsilon\,\&\,x_{i}\in\mathbb{N}^{+}\}=
\{(x_{i})_{1\leq i\leq m}:\sum_{i=1}^{m}\frac{x_{i}}{a_{i}}>1\,\&\,x_{i}\in\mathbb{N}^{+}\}
\end{equation}
holds.
Combining with \ref{lem:ideal},
one can obtain that
$$\mathcal{I}((1-\varepsilon)\varphi)_{o}\cap\{(z^{\alpha},o)\}_{\alpha\in\mathbb{N}^{n}}
=\mathcal{I}(\varphi)_{o}\cap\{(z^{\alpha},o)\}_{\alpha\in\mathbb{N}^{n}}$$
Then it follows from
Lemma \ref{lem:torus_compara} that the following remark holds.
\end{proof}

\subsection{Maximal equisingular weights and minimal integrations}

One can define the maximal equisingular weight as follows

\begin{Definition}
\label{def:maximal_germ}
A weight $\varphi_{\max}$ was called maximal equisingular near $z_{0}$
if for any plurisubharmonic function $\varphi\geq \varphi_{\max}+O(1)$ near $z_{0}$
satisfying $\mathcal{I}(\varphi)_{z_{0}}=\mathcal{I}(\varphi_{\max})_{z_{0}}$,
the inequality $\varphi=\varphi_{\max}+O(1)$ holds near $z_{0}$.
\end{Definition}

It follows from Definition \ref{def:maximal_germ} that for any plurisubharmonic function $\varphi$ near $z_{0}$,
\begin{equation}
\label{equ:maximal}
\mathcal{I}(\varphi_{\max})_{z_{0}}=\mathcal{I}(\max\{\varphi,\varphi_{\max}+C\})_{z_{0}}
\end{equation}
holds for any $C>0$.

\begin{Remark}
\label{rem:maximal_analytic}
Let $\varphi$ be a maximal equisingular weight near $o$,
Then the following two statements are equivalent

(1) $\varphi$ has decreasing equisingular approximations with analytic singularities near $o$;

(2) $\varphi$ has analytic singularities near $o$.
\end{Remark}

Let $F$ be a holomorphic function on pseudoconvex domain $D\subset\mathbb{C}^{n}$ (see \cite{demailly-book}) containing the origin $o\in\mathbb{C}^{n}$.

Recall that the minimal integration related to ideal $I\subset\mathcal{O}_{o}$ was defined in \cite{guan-effect} as follows $$C_{F,I}(D):=\inf\{\int_{D}|\tilde{F}|^{2}|(\tilde{F}-F,o)\in I\&\tilde{F}\in\mathcal{O}(D)\}.$$

Let $\varphi$ be a plurisubharmonic function on $D$.
In \cite{guan-effect}, the following concavity of $G(-\log r):=C_{F,\mathcal{I}(\varphi)_{o}}(\{\varphi<\frac{1}{2}\log r\})$ was presented.

\begin{Proposition}
\label{prop:logconcave}\cite{guan-effect}
Assume that $G(0)<+\infty$.
Then
$G(-\log r)$ is concave with respect to $r\in(0,1]$,
which implies that

(1) the inequality $G(-\log r)\geq rG(0)$ holds for any $r\in(0,1]$;

(2) the equality $G(-\log r)=rG(0)$
holds for some $r\in(0,1)$ if and only if the equality holds for any $r\in(0,1]$.
\end{Proposition}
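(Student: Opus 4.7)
The plan is to prove the concavity of $H(r) := G(-\log r)$ on $(0,1]$ via the sharp Ohsawa--Takegoshi--Guan--Zhou $L^{2}$-extension theorem, and then to derive (1) and (2) from elementary properties of concave functions.

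I would first record three basic properties of $H$. \emph{Monotonicity:} for $r_{1}<r_{2}$ the inclusion $\{\varphi<\tfrac{1}{2}\log r_{1}\}\subset\{\varphi<\tfrac{1}{2}\log r_{2}\}$ shows any candidate for the $H(r_{2})$-problem restricts to a candidate for the $H(r_{1})$-problem with smaller $L^{2}$-norm, so $H$ is non-decreasing. \emph{Existence of a minimizer} $\tilde F_{r}$ at each $r$: follows from the hypothesis $G(0)<+\infty$ together with weak-$L^{2}$ compactness and Montel's theorem. \emph{Boundary behavior:} $H(r)\to 0$ as $r\to 0^{+}$, because the sublevel set shrinks into the polar set $\{\varphi=-\infty\}$, which has Lebesgue measure zero.

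The heart of the argument is the chord inequality
\[
 H(\lambda r_{1}+(1-\lambda)r_{2})\ \geq\ \lambda H(r_{1})+(1-\lambda)H(r_{2}),\qquad 0<r_{1}<r_{2}\leq 1,\ \lambda\in[0,1].
\]
I would establish it by applying the sharp $L^{2}$-extension theorem on $\{\varphi<\tfrac{1}{2}\log r_{2}\}$ with a carefully designed weight interpolating between the two sublevel sets, for instance something of the form $\max(\varphi,\tfrac{1}{2}\log r_{1})$, and exploiting the equality case of the sharp constant. An equivalent route (closer to the strategy in \cite{guan-effect}) is to show that $H$ is absolutely continuous, compute $H'(r)$ in terms of the boundary mass of $\tilde F_{r}$ on $\{\varphi=\tfrac{1}{2}\log r\}$, and prove directly that $H'$ is non-increasing by applying the extension theorem with sharp constant to the difference quotient as $r_{1}\nearrow r_{2}$. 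Once concavity is established, (1) is immediate: after the continuous extension $H(0):=0$, the graph of $H$ lies above the chord joining $(0,0)$ and $(1,G(0))$, hence $H(r)\geq rG(0)$. Claim (2) is the standard rigidity: the difference $H(r)-rG(0)$ is a non-negative concave function vanishing at $0$ and $1$, so vanishing at any interior point forces it to vanish identically.

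\emph{Main obstacle.} The decisive difficulty is producing a \emph{lower} bound on the infimum $H(r)$. Lower bounds on infima must hold against every admissible competitor and cannot be obtained by exhibiting a test function, so the argument must exploit sharp rigidity in the $L^{2}$-estimate. The sharp constant in the Ohsawa--Takegoshi--Guan--Zhou extension is precisely what converts a sublevel-set ratio into the linear-interpolation weight $\lambda$; any non-sharp version would give concavity only up to a multiplicative loss and would not deliver the clean bound $H(r)\geq rG(0)$ in consequence (1).
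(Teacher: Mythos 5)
The paper itself does not prove this Proposition: it is quoted from \cite{guan-effect}, and your overall strategy (an $L^{2}$-method with optimal constants to get concavity of the minimal $L^{2}$ integral, then elementary concavity arguments for (1) and (2)) is the same strategy as that source. Your deduction of (1) and (2) from concavity is correct: concavity together with nonnegativity already gives $G(-\log r)\geq rG(0)$, and a nonnegative concave function on $(0,1]$ vanishing at an interior point must vanish identically, which is the rigidity in (2); the boundary value $\lim_{r\to 0^{+}}G(-\log r)=0$ is supplied by dominated convergence, exactly as the paper notes right after the Proposition.

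The heart of the Proposition, however --- the chord inequality, i.e.\ concavity itself --- is not proved in your proposal; it is only announced. ``Apply the sharp $L^{2}$-extension theorem on $\{\varphi<\frac{1}{2}\log r_{2}\}$ with a weight like $\max(\varphi,\frac{1}{2}\log r_{1})$ and exploit the equality case of the sharp constant'' is not a checkable argument: Ohsawa--Takegoshi-type theorems extend from subvarieties, not from sublevel sets, and what is actually done in \cite{guan-effect} (following the strong openness machinery of \cite{GZopen-c}) is to take the minimizer on the smaller sublevel set, solve a $\bar\partial$-equation of the form $\bar\partial u=\bar\partial\bigl(b(\varphi)\bigr)\cdot\tilde F$ with the optimal-constant family of weights built from $\max(\varphi,\frac{1}{2}\log r_{1})$, and then verify two delicate points your sketch omits: (i) the correction $u$ must satisfy $(u,o)\in\mathcal{I}(\varphi)_{o}$, since otherwise the glued function is not admissible for the constraint $(\tilde F-F,o)\in\mathcal{I}(\varphi)_{o}$ and no lower bound on the infimum follows; and (ii) the precise linear factor in $r$ comes from the optimal estimate itself applied to this family of weights, not from its ``equality case''. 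Your alternative route (absolute continuity of $H$ and monotonicity of $H'$) presupposes regularity of $r\mapsto G(-\log r)$ that must itself be extracted from the same inequality, so it does not bypass the difficulty. In short, you identify the right machinery and correctly isolate the main obstacle (a lower bound on an infimum), but the decisive step that constitutes the Proposition is left as a gap.
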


It follows from the dominated convergence theorem that $G(0)<+\infty$ implies $\lim_{r\to0+0}G(-\log r)=0$.

\begin{Lemma}
\label{lem:compara}
Let $\varphi_{1}\leq\varphi_{2}< 0$ be plurisubharmonic functions on $D$.
Assume that $C_{F,\mathcal{I}(\varphi_{1})_{o}}(D)<+\infty$, and $(F,o)\not\in \mathcal{I}(\varphi_{1})_{o}$.
Then the following three statements are equivalent

(1) $\varphi_{1}=\varphi_{2}$ on $D$;

(2) the quality $\{\varphi_{1}<\log r\}=\{\varphi_{2}<\log r\}$ holds for any $r\in(0,1]$;

(3) the equality $C_{F,\mathcal{I}(\varphi_{1})_{o}}(\{\varphi_{1}<\log r\})=C_{F,\mathcal{I}(\varphi_{1})_{o}}(\{\varphi_{2}<\log r\})$ holds for any $r\in(0,1]$.
\end{Lemma}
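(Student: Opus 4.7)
The plan is to prove $(1) \Rightarrow (2) \Rightarrow (3)$ and then the key implication $(3) \Rightarrow (2)$, closing the cycle via $(2) \Rightarrow (1)$. The first chain is trivial: $(1)$ yields $(2)$ directly, and under $(2)$ the two minimal integrations in $(3)$ are taken over identical domains.

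The implication $(2) \Rightarrow (1)$ is pointwise. For any $z \in D$, the hypothesis that $\varphi_1(z) < \log r$ if and only if $\varphi_2(z) < \log r$ for every $r \in (0,1]$ forces $\varphi_1(z) = \varphi_2(z)$, since both values lie in $[-\infty, 0)$ and must be on the same side of every threshold in $(-\infty, 0]$.

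The decisive step $(3) \Rightarrow (2)$ will be proved by contrapositive. Suppose some $r_0 \in (0,1]$ witnesses $\{\varphi_2 < \log r_0\} \subsetneq \{\varphi_1 < \log r_0\}$; the goal is to produce
\[ C_{F,\mathcal{I}(\varphi_1)_o}(\{\varphi_2 < \log r_0\}) < C_{F,\mathcal{I}(\varphi_1)_o}(\{\varphi_1 < \log r_0\}), \]
contradicting $(3)$. Two ingredients are required. First, the difference of these open sublevel sets has positive Lebesgue measure: if a point $p$ of the difference lay in a ball $B(p,\epsilon) \subset \{\varphi_1 < \log r_0\}$ whose intersection with $\{\varphi_2 \geq \log r_0\}$ had measure zero, then the mean of $\varphi_2$ over $B(p,\epsilon)$ would be strictly less than $\log r_0$, violating the plurisubharmonic sub-mean-value inequality $\varphi_2(p) \leq \mathrm{mean}$ combined with $\varphi_2(p) \geq \log r_0$. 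Second, the finiteness $C_{F,\mathcal{I}(\varphi_1)_o}(\{\varphi_1 < \log r_0\}) \leq C_{F,\mathcal{I}(\varphi_1)_o}(D) < +\infty$ yields, by Hilbert-space projection onto the affine closed subspace of admissible $L^2$-holomorphic functions, a minimizer $\tilde F_0$ on $\{\varphi_1 < \log r_0\}$; moreover $\tilde F_0 \not\equiv 0$, since otherwise $(F,o) \in \mathcal{I}(\varphi_1)_o$, contrary to hypothesis.

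On the connected component of $o$ the holomorphic $\tilde F_0$ is nonzero almost everywhere, so $\int_{\{\varphi_1 < \log r_0\} \setminus \{\varphi_2 < \log r_0\}} |\tilde F_0|^2 > 0$; the restriction of $\tilde F_0$ to $\{\varphi_2 < \log r_0\}$ remains an admissible candidate (the germ condition at $o$ is preserved), yielding the desired strict inequality. The main obstacle I anticipate is the positive-measure step, which crucially uses the sub-mean-value property of plurisubharmonic functions to rule out the pathology of a thin discrepancy between the two open sets; a secondary technicality is reducing to the connected component of $o$ should the sublevel sets fail to be connected, since the minimizer can be taken to vanish on all other components without affecting the infimum.
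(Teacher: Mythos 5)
Your route is the same as the paper's: the only substantive implication is $(3)\Rightarrow(2)$, handled by taking the unique $L^{2}$ minimizer $\tilde{F}$ on $\{\varphi_{1}<\log r_{0}\}$ (the paper invokes Lemma 2.2 of \cite{guan-effect}; your Hilbert-space projection onto the closed affine set of admissible holomorphic functions is the same mechanism), noting $\tilde{F}\not\equiv 0$ because $(F,o)\notin\mathcal{I}(\varphi_{1})_{o}$, and comparing $\int_{\{\varphi_{1}<\log r_{0}\}}|\tilde{F}|^{2}$ with $\int_{\{\varphi_{2}<\log r_{0}\}}|\tilde{F}|^{2}$, exactly as in \eqref{equ:compara}. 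Two of your additions are welcome: the sub-mean-value argument showing that $\{\varphi_{1}<\log r_{0}\}\cap\{\varphi_{2}\geq\log r_{0}\}$ has positive Lebesgue measure is a detail the paper asserts silently, and your explicit $(2)\Rightarrow(1)$ threshold argument is correct (the paper treats it as immediate). You also have the inclusion in the right direction, $\{\varphi_{2}<\log r_{0}\}\subsetneq\{\varphi_{1}<\log r_{0}\}$; the paper's displayed inclusion is a typo.

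There is, however, a genuine gap at your final step. To conclude $\int_{\{\varphi_{1}<\log r_{0}\}\setminus\{\varphi_{2}<\log r_{0}\}}|\tilde{F}|^{2}>0$ you need the positive-measure part of the discrepancy to lie in a connected component of $\{\varphi_{1}<\log r_{0}\}$ on which $\tilde{F}$ is not identically zero, and the hypothesis $(F,o)\notin\mathcal{I}(\varphi_{1})_{o}$ only guarantees this on the component containing $o$. As you yourself observe, the minimizer vanishes identically on every other component, since the constraint $(\tilde{F}-F,o)\in\mathcal{I}(\varphi_{1})_{o}$ only sees the germ at $o$; for the same reason $C_{F,\mathcal{I}(\varphi_{1})_{o}}(\Omega)$ depends only on the component of $\Omega$ containing $o$, so if the two sublevel sets differ only inside components not containing $o$, neither your strict inequality nor statement $(3)$ can detect the difference at all. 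Thus the "secondary technicality" you defer is precisely where an additional ingredient is needed --- e.g. connectedness of the sublevel sets $\{\varphi_{1}<\log r\}$, which does hold for the weights $\log\max_{1\leq i\leq m}|z_{i}|^{a_{i}}$ to which the lemma is applied later --- and the fix you sketch (let the minimizer vanish off the $o$-component) works against the strict inequality rather than for it. In fairness, the paper's own proof asserts the strict inequality in \eqref{equ:compara} under the same implicit connectivity assumption; but having isolated the positive-measure issue explicitly, you should also have recorded that positive measure is only useful inside the component of $o$.
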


\begin{proof}
It suffices to prove that $(3)\Rightarrow (2)$. We prove it by contradiction: if not,
then $\{\varphi_{1}<\log r\}\subsetneq\{\varphi_{2}<\log r\}$ holds for some $r$.
It follows from $C_{F,\mathcal{I}(\varphi_{1})_{o}}(D)<+\infty$, $(F,o)\not\in \mathcal{I}(\varphi_{1})_{o}$ and Lemma 2.2 in \cite{guan-effect} there exists (unique) $\tilde{F}\in\mathcal{O}(\{\varphi_{1}<\log r\})$ such that $\int_{\{\varphi_{1}<\log r\}}|\tilde{F}|^{2}=C_{F,\mathcal{I}(\varphi_{1})_{o}}(\{\varphi_{1}<\log r\})$ and $(\tilde{F}-F,o)\in\mathcal{I}(\varphi_{1})_{o}$.
Then we have
\begin{equation}
\label{equ:compara}
\begin{split}
C_{F,\mathcal{I}(\varphi_{1})_{o}}(\{\varphi_{1}<\log r\})
&=\int_{\{\varphi_{1}<\log r\}}|\tilde{F}|^{2}
\\&>\int_{\{\varphi_{2}<\log r\}}|\tilde{F}|^{2}\geq
C_{F,\mathcal{I}(\varphi_{1})_{o}}(\{\varphi_{2}<\log r\}),
\end{split}
\end{equation}
which contradicts statement $(3)$ in the present Lemma.
\end{proof}

By Proposition \ref{prop:logconcave} and Lemma \ref{lem:compara}, one can obtain the following remark.

\begin{Remark}
\label{rem:equality_lower_optimal}
If equality $G(-\log r)=rG(0)$ holds for some $r\in(0,1)$,
then for any plurisubharmonic function $\varphi_{2}\geq\varphi$ on $D$ satisfying
$\mathcal{I}(\varphi_{2})_{z_{0}}=\mathcal{I}(\varphi)_{z_{0}}$,
the equality $\varphi_{2}=\varphi$ holds on $D$.
\end{Remark}

\begin{proof}
Proposition \ref{prop:logconcave} shows that
if equality $G(-\log r)=rG(0)$ holds for some $r\in(0,1)$,
then  $G(-\log r)=rG(0)$ holds for any $r\in(0,1)$.
By Lemma \ref{lem:compara} $(\varphi\sim \varphi_{1})$ and $\varphi_{2}\geq\varphi$,
it follows that
$\varphi_{2}=\varphi$ holds on $D$.
\end{proof}

\begin{Lemma}
\label{lem:max_equisingular}
Let $\varphi_{1}$ and $\varphi_{2}$ be two plurisubharmonic functions near $z_{0}$,
such that $\varphi_{1}\leq\varphi_{2}+O(1)$.
If $\mathcal{I}(\varphi_{1})_{z_{0}}=\mathcal{I}(\varphi_{2})_{z_{0}}$,
then $$\mathcal{I}(\varphi_{1})_{z_{0}}=\mathcal{I}(\varphi_{2})_{z_{0}}=\mathcal{I}(\max\{\varphi_{1},\varphi_{2}+C\})_{z_{0}}$$
holds for any $C\in\mathbb{R}$.
\end{Lemma}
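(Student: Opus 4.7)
The plan is to prove the displayed equality by establishing the two inclusions $\mathcal{I}(\varphi_{1})_{z_{0}}\subseteq \mathcal{I}(\psi)_{z_{0}}$ and $\mathcal{I}(\psi)_{z_{0}}\subseteq \mathcal{I}(\varphi_{1})_{z_{0}}$, where I write $\psi:=\max\{\varphi_{1},\varphi_{2}+C\}$. The equality $\mathcal{I}(\varphi_{1})_{z_{0}}=\mathcal{I}(\varphi_{2})_{z_{0}}$ is given by hypothesis, so it is enough to relate $\mathcal{I}(\psi)_{z_{0}}$ to $\mathcal{I}(\varphi_{1})_{z_{0}}$. The whole argument rests on one elementary principle: if $\alpha\leq \beta$ pointwise near $z_{0}$, then $|f|^{2}e^{-2\beta}\leq|f|^{2}e^{-2\alpha}$, hence $\mathcal{I}(\beta)_{z_{0}}\subseteq\mathcal{I}(\alpha)_{z_{0}}$.

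First I would record the trivial inclusion. Since $\psi\geq\varphi_{1}$ everywhere, the monotonicity principle above yields $\mathcal{I}(\varphi_{1})_{z_{0}}\subseteq\mathcal{I}(\psi)_{z_{0}}$ at once, with no use of the hypotheses.

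For the reverse inclusion, I would unpack the assumption $\varphi_{1}\leq\varphi_{2}+O(1)$ into the concrete statement that there exist a neighborhood $U$ of $z_{0}$ and a constant $C_{0}\in\mathbb{R}$ with $\varphi_{1}\leq\varphi_{2}+C_{0}$ on $U$. Setting $C_{1}:=\max\{C,C_{0}\}$, this produces the pointwise bound
$$\psi=\max\{\varphi_{1},\varphi_{2}+C\}\leq\max\{\varphi_{2}+C_{0},\varphi_{2}+C\}=\varphi_{2}+C_{1}\quad\text{on }U,$$
which holds no matter whether $C$ is larger or smaller than $C_{0}$. Applying the monotonicity principle again and then using the invariance of multiplier ideals under an additive constant gives $\mathcal{I}(\psi)_{z_{0}}\subseteq\mathcal{I}(\varphi_{2}+C_{1})_{z_{0}}=\mathcal{I}(\varphi_{2})_{z_{0}}$, and the equisingularity hypothesis $\mathcal{I}(\varphi_{1})_{z_{0}}=\mathcal{I}(\varphi_{2})_{z_{0}}$ finishes the job.

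I do not anticipate a real obstacle here; the statement is essentially a book-keeping consequence of pointwise monotonicity and additive invariance of $\mathcal{I}(\cdot)$. The only step that requires a moment of care is the bound $\psi\leq\varphi_{2}+C_{1}$: one must use the one-sided domination $\varphi_{1}\leq\varphi_{2}+O(1)$ to control the $\varphi_{1}$-branch of the maximum by a shift of $\varphi_{2}$, uniformly in the sign of $C$, which is precisely why the hypothesis $\varphi_{1}\leq\varphi_{2}+O(1)$ cannot be dropped.
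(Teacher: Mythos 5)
Your argument is correct and is essentially the paper's own proof, which simply observes the sandwich $\varphi_{1}\leq\max\{\varphi_{1},\varphi_{2}+C\}\leq\varphi_{2}+O(1)$ near $z_{0}$ and invokes monotonicity of $\mathcal{I}(\cdot)$ together with the hypothesis $\mathcal{I}(\varphi_{1})_{z_{0}}=\mathcal{I}(\varphi_{2})_{z_{0}}$; your only slip is in the statement of the ``elementary principle,'' where the inclusion is written backwards --- from $\alpha\leq\beta$ and $|f|^{2}e^{-2\beta}\leq|f|^{2}e^{-2\alpha}$ one gets $\mathcal{I}(\alpha)_{z_{0}}\subseteq\mathcal{I}(\beta)_{z_{0}}$, not $\mathcal{I}(\beta)_{z_{0}}\subseteq\mathcal{I}(\alpha)_{z_{0}}$ --- but both of your actual applications use the correct direction, so the proof stands.
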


\begin{proof}
Note that $\varphi_{1}\leq\max\{\varphi_{1},\varphi_{2}+C\}\leq \varphi_{2}+O(1)$
holds near $z_{0}$.
\end{proof}

Remark \ref{rem:equality_lower_optimal} implies the following sufficient condition of maximal equisingular weight.

\begin{Proposition}
\label{pro:maximal_sufficient}
Let $\{D_{j}\}_{j=1,2,\cdots}$ be a sequence of pseudoconvex subdomains on $D$
satisfying $\cap_{j}D_{j}=\{o\}$, $\varphi|_{D_{j}}<\frac{1}{2}\log r_{j}$.
Let $F_{j}$ be holomorphic functions on $D_{j}$.
If
\begin{equation}
\label{equ:equiv}
\frac{1}{r'_{j}}\int_{\{\varphi<\frac{1}{2}\log r'_{j}\}}|F_{j}|^{2}=\frac{C_{F,\mathcal{I}(\varphi)_{o}}(D_{j})}{r_{j}}
\end{equation}
holds for any $j\in\{1,2,\cdots\}$ and some $r'_{j}\in(0,r_{j})$,
then $\varphi$ is a maximal equisingular weight near $o$.
\end{Proposition}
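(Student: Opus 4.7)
The plan is to establish the maximal equisingular property of Definition \ref{def:maximal_germ} by reducing, for each sufficiently large $j$, to the equality case of Remark \ref{rem:equality_lower_optimal} on the pseudoconvex domain $D_j$. Given a plurisubharmonic function $\psi$ near $o$ with $\psi \geq \varphi - C_0$ and $\mathcal{I}(\psi)_o = \mathcal{I}(\varphi)_o$, the assumption $\bigcap_j D_j = \{o\}$ with every $D_j$ an open neighborhood of $o$ allows us to fix $j$ so large that $D_j$ lies in the neighborhood on which $\psi + C_0 \geq \varphi$ holds as psh functions.

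The main step is to verify, on $D_j$, the concavity-equality hypothesis of Remark \ref{rem:equality_lower_optimal}. Write $G(-\log r) := C_{F,\mathcal{I}(\varphi)_o}(\{\varphi < \tfrac{1}{2}\log r\})$. Reading (\ref{equ:equiv}) together with the natural role of $F_j$ as the extremal minimizer on $D_j$, so that $(F_j - F, o)\in\mathcal{I}(\varphi)_o$ and $\int_{D_j}|F_j|^2 = C_{F,\mathcal{I}(\varphi)_o}(D_j)$, the function $F_j$ is automatically a feasible competitor for the minimization problem on the smaller sublevel set $\{\varphi < \tfrac{1}{2}\log r'_j\}\subseteq D_j$. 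Hence
\[
G(-\log r'_j)\;\leq\;\int_{\{\varphi<\frac{1}{2}\log r'_j\}}|F_j|^2\;=\;\frac{r'_j}{r_j}\,C_{F,\mathcal{I}(\varphi)_o}(D_j),
\]
while Proposition \ref{prop:logconcave} applied to the rescaled weight $\varphi-\tfrac{1}{2}\log r_j$ on $D_j$ (so that $r_j$ plays the role of $1$) gives the reverse inequality. Combining the two forces equality in Proposition \ref{prop:logconcave} at the parameter value $r'_j/r_j\in(0,1)$.

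Invoking Remark \ref{rem:equality_lower_optimal} on $D_j$ then yields $\psi + C_0 = \varphi$ pointwise on $D_j$, so $\psi-\varphi\equiv -C_0$ on the open neighborhood $D_j$ of $o$. This is exactly the statement $\psi = \varphi + O(1)$ near $o$, which confirms that $\varphi$ is maximal equisingular.

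The principal obstacle I expect is the clean bookkeeping of normalizations: Proposition \ref{prop:logconcave} is phrased with parameter in $(0,1]$ and reference value $G(0)=C_{F,\mathcal{I}(\varphi)_o}(\{\varphi<0\})$, whereas here the natural parameter range is $(0,r_j]$ and the reference value is $C_{F,\mathcal{I}(\varphi)_o}(D_j)$. One must check that, after the additive shift $\varphi\mapsto\varphi-\tfrac{1}{2}\log r_j$, the domain $D_j$ indeed corresponds to $\{\varphi<\tfrac{1}{2}\log r_j\}$ (as is implicit in $\varphi|_{D_j}<\tfrac{1}{2}\log r_j$) and that the multiplier ideal at $o$ is unaffected by the constant shift; once these identifications are in place, both the upper bound from feasibility of $F_j$ and the lower bound from log-concavity align, and the argument becomes essentially mechanical.
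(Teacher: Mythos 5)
Your first half is fine: reading $F_j$ as a competitor for the minimization problem (i.e. $(F_j-F,o)\in\mathcal{I}(\varphi)_{o}$, which the paper leaves implicit but which is how the proposition is used in Remark \ref{rem:maximal}), the hypothesis \eqref{equ:equiv} combined with the concavity of Proposition \ref{prop:logconcave}, applied to $\varphi-\tfrac{1}{2}\log r_{j}$ on $D_{j}$, does force the equality case $G(-\log r)=rG(0)$ at $r=r'_{j}/r_{j}\in(0,1)$. This is exactly the normalization the paper intends.

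The gap is in your final step. You apply Remark \ref{rem:equality_lower_optimal} directly to $\varphi_{2}=\psi+C_{0}$ and conclude $\psi+C_{0}=\varphi$ pointwise on $D_{j}$. This cannot be right: taking $\psi=\varphi$ and any $C_{0}>0$ satisfies all your hypotheses but gives the absurdity $\varphi+C_{0}=\varphi$. The reason the application is illegitimate is that Remark \ref{rem:equality_lower_optimal} rests on Lemma \ref{lem:compara}, which requires $\varphi_{1}\leq\varphi_{2}<0$; after the normalization by $-\tfrac{1}{2}\log r_{j}$ the comparison weight must therefore satisfy $\varphi_{2}<\tfrac{1}{2}\log r_{j}$ on $D_{j}$, and $\psi+C_{0}$ carries no such upper bound (in particular $G_{2}(0)=C_{F,\mathcal{I}(\varphi)_{o}}(D_{j})$ is needed, which uses $\{\varphi_{2}<\tfrac{1}{2}\log r_{j}\}\supseteq D_{j}$). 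The missing idea is the paper's truncation: since $D_{j}\subset\subset U$ and $\psi$ is bounded above on $\overline{D_{j}}$, choose $N_{j}\gg0$ with $\psi-N_{j}<\tfrac{1}{2}\log r_{j}$ on $D_{j}$ and replace $\psi$ by $\max\{\varphi,\psi-N_{j}\}$; by Lemma \ref{lem:max_equisingular} this keeps the multiplier ideal equal to $\mathcal{I}(\varphi)_{o}$, it dominates $\varphi$, and it obeys the required upper bound, so Remark \ref{rem:equality_lower_optimal} yields $\max\{\varphi,\psi-N_{j}\}=\varphi$ on $D_{j}$, i.e. $\psi\leq\varphi+N_{j}$ there, which together with $\psi\geq\varphi-C_{0}$ gives the correct conclusion $\psi=\varphi+O(1)$ near $o$ (the paper phrases this as a contradiction with $\varlimsup_{z\to o}(\varphi'-\varphi)=+\infty$). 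Without this truncation step your argument both proves too much and uses the remark outside its range of validity.
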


\begin{proof}
We prove Proposition \ref{pro:maximal_sufficient} by contradiction:
if not, there exists a weight $\varphi'$ on a neighborhood $U\ni o$ such that

(1) $\varphi'\geq \varphi$ and $\varlimsup_{z\to o}(\varphi'(z)-\varphi(z))=+\infty$;

(2) $\mathcal{I}(\varphi')_{o}=\mathcal{I}(\varphi)_{o}$.

One can find some $j$ such that $D_{j}\subset\subset U$.
Then there exists $N_{j}>>0$ such that $\varphi'-N_{j}<\frac{1}{2}\log r_{j}$ on $D_{j}$.

Consider $\max\{\varphi,\varphi'-N_{j}\}<\frac{1}{2}\log r_{j}$.
It follows from Lemma \ref{lem:max_equisingular} that
$\mathcal{I}(\max\{\varphi,\varphi'-N_{j}\})_{o}=\mathcal{I}(\varphi)_{o}$.
Note that Remark \ref{rem:equality_lower_optimal} $(\max\{\varphi,\varphi'-N_{j}\}-\frac{1}{2}\log r_{j}\sim \varphi_{2},$
$\varphi-\frac{1}{2}\log r_{j}\sim \varphi$, $\frac{r'_{j}}{r_{j}}\sim r)$ implies that
$$\max\{\varphi,\varphi'-N_{j}\}=\varphi,$$
which contradicts (1) $\varlimsup_{z\to o}(\varphi'(z)-\varphi(z))=+\infty$.
Then Proposition \ref{pro:maximal_sufficient} has been proved.
\end{proof}

Equality \ref{equ:standard_int} implies the following lemma.

\begin{Lemma}
\label{lem:minimal_int}
Let $\varphi=\log\max_{1\leq i\leq m}|z_{i}|^{a_{i}}$ $(m\leq n)$, where $a_{i}>0$ for any $1\leq i\leq m$.
Let $\alpha=(\alpha_{1},\cdots,\alpha_{n})$.
Assume that $\sum_{i=1}^{m}\frac{\alpha_{i}+1}{a_{i}}\leq 1$.
Then
\begin{equation}
\label{equ:minimal_int}
\begin{split}
C_{z^{\alpha},\mathcal{I}(\varphi)_{o}}(\{\varphi<\frac{1}{2}\log r\}\cap V_{r_{0}})
&=\int_{\{\varphi<\frac{1}{2}\log r\}\cap V_{r_{0}}}|z^{\alpha}|^{2}
\\&=
\pi^{m}\frac{r^{\sum_{i=1}^{m}\frac{\alpha_{i}+1}{a_{i}}}}{(\alpha_{1}+1)\cdots (\alpha_{m}+1)}\int_{V_{r_{0}}}|z_{m+1}^{\alpha_{m+1}}\cdots z_{n}^{\alpha_{n}}|^{2}.
\end{split}
\end{equation}
\end{Lemma}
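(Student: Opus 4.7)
The second equality in \eqref{equ:minimal_int} is exactly the standard calculation \eqref{equ:standard_int}, so the content is the first equality, namely that the monomial $z^{\alpha}$ itself minimizes $\int|\tilde{F}|^{2}$ among holomorphic $\tilde{F}$ on $D_{r,r_{0}}:=\{\varphi<\frac{1}{2}\log r\}\cap V_{r_{0}}$ with $(\tilde{F}-z^{\alpha},o)\in\mathcal{I}(\varphi)_{o}$. The plan is standard: produce $z^{\alpha}$ as a competitor, then match it against any other competitor via a Fourier/Parseval argument on the Reinhardt domain $D_{r,r_{0}}$.

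First I would note that under the hypothesis $\sum_{i=1}^{m}\frac{\alpha_{i}+1}{a_{i}}\leq 1$, Lemma \ref{lem:ideal} gives $(z^{\alpha},o)\notin\mathcal{I}(\varphi)_{o}$, but the integral $\int_{D_{r,r_{0}}}|z^{\alpha}|^{2}$ is finite by \eqref{equ:standard_int}. In particular $\tilde{F}=z^{\alpha}$ is an admissible choice in the infimum, which yields the inequality
\begin{equation*}
C_{z^{\alpha},\mathcal{I}(\varphi)_{o}}(D_{r,r_{0}})\leq\int_{D_{r,r_{0}}}|z^{\alpha}|^{2}.
\end{equation*}

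For the reverse inequality, let $\tilde{F}\in\mathcal{O}(D_{r,r_{0}})$ satisfy $(\tilde{F}-z^{\alpha},o)\in\mathcal{I}(\varphi)_{o}$. The domain $D_{r,r_{0}}$ is a (complete) Reinhardt polydisc containing $o$, so $\tilde{F}$ has a Taylor expansion $\tilde{F}=\sum_{\beta}c_{\beta}z^{\beta}$ converging on all of $D_{r,r_{0}}$. Integrating over the torus action $z_{i}\mapsto e^{i\theta_{i}}z_{i}$ gives orthogonality of the monomials in $L^{2}(D_{r,r_{0}})$, hence
\begin{equation*}
\int_{D_{r,r_{0}}}|\tilde{F}|^{2}=\sum_{\beta}|c_{\beta}|^{2}\int_{D_{r,r_{0}}}|z^{\beta}|^{2}.
\end{equation*}
Setting $g:=\tilde{F}-z^{\alpha}$ one has $(g,o)\in\mathcal{I}(\varphi)_{o}$, and Lemma \ref{lem:torus} applied to $g$ shows that each Taylor coefficient $b_{\beta}$ of $g$ satisfies $(b_{\beta}z^{\beta},o)\in\mathcal{I}(\varphi)_{o}$. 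Since $(z^{\alpha},o)\notin\mathcal{I}(\varphi)_{o}$, this forces $b_{\alpha}=0$, i.e.\ the $z^{\alpha}$-coefficient of $\tilde{F}$ is exactly $c_{\alpha}=1$. Combined with the Parseval-type identity above, this gives
\begin{equation*}
\int_{D_{r,r_{0}}}|\tilde{F}|^{2}\geq\int_{D_{r,r_{0}}}|z^{\alpha}|^{2},
\end{equation*}
and taking the infimum over $\tilde{F}$ completes the proof.

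The main thing to be careful about is the Fourier/orthogonality step: one needs $D_{r,r_{0}}$ to be Reinhardt containing the origin so that Hartogs' theorem guarantees global convergence of the Taylor series there, and one must justify interchanging the sum with the integral (which is fine since the expansion is in $L^{2}$ on each compact subset and we only need a lower bound obtained by truncating the sum). Everything else is a direct combination of Lemma \ref{lem:torus}, Lemma \ref{lem:ideal} and the explicit integral \eqref{equ:standard_int}.
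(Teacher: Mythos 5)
Your proof is correct and follows essentially the same route as the paper: the explicit value comes from \eqref{equ:standard_int}, and the minimality of $z^{\alpha}$ comes from orthogonality of monomials on the Reinhardt domain $\{\varphi<\frac{1}{2}\log r\}\cap V_{r_{0}}$. The only difference is that you spell out, via Lemma \ref{lem:torus} and Lemma \ref{lem:ideal}, why every admissible competitor must have $z^{\alpha}$-coefficient exactly $1$ — a point the paper leaves implicit by writing the competitor as $z^{\alpha}+\sum_{\alpha'\neq\alpha}b_{\alpha'}z^{\alpha'}$.
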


\begin{proof}
Note that for any $L^{2}$ integrable holomorphic function $z^{\alpha}+\sum_{\alpha'\neq\alpha}b_{\alpha'}z^{\alpha'}$ on $\{\varphi<\frac{1}{2}\log r\}\cap V_{r_{0}}$,
inequality
\begin{equation}
\label{equ:minimal_ideal}
\int_{\{\varphi<\frac{1}{2}\log r\}\cap V_{r_{0}}}|z^{\alpha}+\sum_{\alpha'\neq\alpha}b_{\alpha'}z^{\alpha'}|^{2}\geq\int_{\{\varphi<\frac{1}{2}\log r\}\cap V_{r_{0}}}|z^{\alpha}|^{2}
\end{equation}
holds,
then one can obtain the present lemma by equality \ref{equ:standard_int}.
\end{proof}

If $\sum_{i=1}^{m}\frac{\alpha_{i}+1}{a_{i}}>1$,
then it follows from Lemma \ref{lem:ideal} that $$(z^{\alpha},o)\in\mathcal{I}(\varphi)_{o},$$
which implies that
$$C_{z^{\alpha},\mathcal{I}(\varphi)_{o}}(\{\varphi<\frac{1}{2}\log r\}\cap V_{r_{0}})=0$$
holds for any $r>0$ and $r_{0}>0$.

It follows from Proposition \ref{pro:maximal_sufficient} and Lemma \ref{lem:minimal_int} that

\begin{Remark}
\label{rem:maximal}
Assume that
$\sum_{i=1}^{m}\frac{\alpha_{i}+1}{a_{i}}=1$ for some $\alpha\in\mathbb{N}^{m}$.
Then $\varphi=\log\max_{1\leq i\leq m}|z_{i}|^{a_{i}}$ is a maximal weight,
where $a_{i}>0$ for any $1\leq i\leq m$.
\end{Remark}

\begin{Proposition}
\label{pro:equiv_maximal}
Let $\varphi=\log\max_{1\leq i\leq m}|z_{i}|^{a_{i}}$, where $a_{i}>0$ $(1\leq i\leq m)$, and $m\leq n$.
Then the following three statements are equivalent

(1) $\varphi$ is not a maximal equisingular weight near $o$;

(2) there exists $\varepsilon\in(0,1)$ such that $\mathcal{I}(\varphi)_{o}=\mathcal{I}((1-\varepsilon)\varphi)_{o}$;

(3) the equation $\sum_{i=1}^{m}\frac{x_{i}}{a_{i}}=1$ has no positive integer solutions.
\end{Proposition}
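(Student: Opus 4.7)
The plan is to prove the three equivalences as a cycle $(3) \Rightarrow (2) \Rightarrow (1) \Rightarrow (3)$, leveraging the preparatory material already built up in this section.

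For $(3) \Rightarrow (2)$, the work has essentially been done in Remark \ref{rem:ideal} (which was stated for $\log\sum$ but the torus-symmetric proof via Lemma \ref{lem:torus_compara} and Lemma \ref{lem:ideal} carries over verbatim to $\log\max$): the assumption that $\sum_{i=1}^{m}\tfrac{x_i}{a_i}=1$ has no positive integer solutions means that the set $\{\alpha\in\mathbb{N}^m : \sum_{i}\tfrac{\alpha_i+1}{a_i}>1\}$ is stable under slight perturbation of the threshold, so for $\varepsilon>0$ small enough the equality $\mathcal{I}((1-\varepsilon)\varphi)_{o}=\mathcal{I}(\varphi)_{o}$ follows from Lemmas \ref{lem:ideal} and \ref{lem:torus_compara}.

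For $(2) \Rightarrow (1)$, the key point is that $\varphi\le 0$ in a neighborhood of $o$, hence $(1-\varepsilon)\varphi\ge\varphi$ there, while $(1-\varepsilon)\varphi-\varphi=-\varepsilon\varphi\to+\infty$ as $z\to o$. In particular $(1-\varepsilon)\varphi\neq\varphi+O(1)$ near $o$. Together with the assumed equality $\mathcal{I}((1-\varepsilon)\varphi)_{o}=\mathcal{I}(\varphi)_{o}$, the failure of Definition \ref{def:maximal_germ} for $\varphi_{\max}:=\varphi$ (using the test function $\varphi':=(1-\varepsilon)\varphi$) shows that $\varphi$ is not maximal equisingular near $o$.

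For $(1) \Rightarrow (3)$, I would argue contrapositively: if the equation $\sum_{i=1}^{m}\tfrac{x_i}{a_i}=1$ admits a positive integer solution $(\alpha_1+1,\dots,\alpha_m+1)$ with $\alpha\in\mathbb{N}^m$, then Remark \ref{rem:maximal} (which is exactly the content of Proposition \ref{pro:maximal_sufficient} applied to the extremal monomial $z^{\alpha}$, for which the minimal integration $C_{z^{\alpha},\mathcal{I}(\varphi)_{o}}$ was computed in Lemma \ref{lem:minimal_int} and saturates the concavity bound of Proposition \ref{prop:logconcave}) asserts that $\varphi$ is itself a maximal equisingular weight near $o$, contradicting $(1)$.

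The only delicate point I anticipate is the indexing match in Step~1 (one must remember that $\alpha\in\mathbb{N}^m$ corresponds under $x_i=\alpha_i+1$ to \emph{positive} integer solutions of $\sum x_i/a_i=1$, which is why the hypothesis in $(3)$ is stated in terms of positive integers). Beyond that the proof is a short assembly of Lemma \ref{lem:ideal}, Remark \ref{rem:ideal}, Definition \ref{def:maximal_germ} and Remark \ref{rem:maximal}, so no further genuinely new computation is needed.
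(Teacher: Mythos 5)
Your proof is correct and follows essentially the same route as the paper: the paper likewise gets $(3)\Rightarrow(2)$ from Remark \ref{rem:ideal}, $(2)\Rightarrow(1)$ directly from Definition \ref{def:maximal_germ} (your test function $(1-\varepsilon)\varphi$ is exactly the point), and $(1)\Rightarrow(3)$ by contradiction via Remark \ref{rem:maximal}. The only inessential slip is your parenthetical worry about transferring Remark \ref{rem:ideal} from $\log\sum$ to $\log\max$: that remark sits in the subsection where $\varphi=\log\max_{1\leq i\leq m}|z_{i}|^{a_{i}}$, so it already applies verbatim.
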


\begin{proof}
It follows from Definition \ref{def:maximal_germ} and Remark \ref{rem:ideal} (respectively) that $"(2)\Rightarrow (1)"$ and $"(3)\Rightarrow (2)"$ hold (respectively).
Then it suffices to consider $"(1)\Rightarrow (3)"$.

We prove $"(1)\Rightarrow (3)"$ by contradiction:
if not, then
the equation $\sum_{i=1}^{m}\frac{x_{i}}{a_{i}}=1$ has a positive integer solution denoted by $(\alpha_{1}+1,\cdots,\alpha_{m}+1)$.
It follows from Remark \ref{rem:maximal} that $\varphi$ is a maximal equisingular weight near $o$, which contradicts statement $(1)$.
Then we obtain that $(1)\Rightarrow (3)$ holds.
\end{proof}

\subsection{Decreasing equisingular approximations with analytic singularities}
\

\begin{Lemma}
\label{lem:approxi}
Let $\varphi=\log\max_{1\leq i\leq m}|z_{i}|^{a_{i}}$, where $a_{i}>0$ $(1\leq i\leq m)$, and $m\leq n$.
If there exists $\varepsilon\in(0,1)$ such that $\mathcal{I}(\varphi)_{o}=\mathcal{I}((1-\varepsilon)\varphi)_{o}$,
then $\mathcal{I}(\varphi)_{o}$ has decreasing equisingular approximations with analytic singularities near $o$.
\end{Lemma}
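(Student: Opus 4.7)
The plan is to construct explicit analytic-singularity approximations by replacing each exponent $a_i$ with a rational approximation from below. Choose sequences of positive rationals $a_i^{(k)}$ with $(1-\varepsilon) a_i < a_i^{(k)} < a_i$ and $a_i^{(k)} \uparrow a_i$ as $k \to \infty$, let $N^{(k)}\in\mathbb{N}^{+}$ be a common denominator so that $a_i^{(k)} = b_i^{(k)}/N^{(k)}$ with $b_i^{(k)} \in \mathbb{N}^{+}$, and set
$$\varphi_k := \log \max_{1 \le i \le m} |z_i|^{a_i^{(k)}}.$$
I would then verify three things: each $\varphi_k$ has analytic singularity near $o$, $\{\varphi_k\}$ is decreasing, and $\mathcal{I}(\varphi_k)_o=\mathcal{I}(\varphi)_o$.

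For the analytic singularity, using $\max \le \sum \le m\cdot\max$ we get
$$\varphi_k \,=\, \tfrac{1}{N^{(k)}} \log \max_i |z_i|^{b_i^{(k)}} \,=\, \tfrac{1}{2N^{(k)}} \log \max_i \bigl|z_i^{b_i^{(k)}}\bigr|^2 \,=\, \tfrac{1}{2N^{(k)}} \log \sum_{i=1}^{m} \bigl|z_i^{b_i^{(k)}}\bigr|^{2} + O(1),$$
which is exactly of the required form $c\log\sum_j |f_j|^2+O(1)$ near $o$. For monotonicity, since $a_i^{(k)}$ is increasing in $k$ and $|z_i|<1$ near $o$, each $|z_i|^{a_i^{(k)}}$ is decreasing in $k$; hence so is the maximum and therefore $\varphi_k$.

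The main step is equisingularity, which is where the hypothesis enters. From $a_i^{(k)} \le a_i$ and $|z_i|\le 1$ near $o$ we have $\varphi_k \ge \varphi$, giving $\mathcal{I}(\varphi_k)_o \supseteq \mathcal{I}(\varphi)_o$. Conversely, since $t \mapsto t^{1-\varepsilon}$ is monotone on $[0,1]$ and so preserves the arg-max,
$$\max_i |z_i|^{(1-\varepsilon) a_i} \,=\, \bigl(\max_i |z_i|^{a_i}\bigr)^{1-\varepsilon},$$
and from $a_i^{(k)} > (1-\varepsilon) a_i$ together with $|z_i|\le 1$ we conclude $\varphi_k \le (1-\varepsilon)\varphi$ near $o$. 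Hence $\mathcal{I}(\varphi_k)_o \subseteq \mathcal{I}((1-\varepsilon)\varphi)_o$, and the hypothesis $\mathcal{I}((1-\varepsilon)\varphi)_o = \mathcal{I}(\varphi)_o$ closes the sandwich.

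There is no serious obstacle here: the openness-type hypothesis is tailored precisely to create a gap between $\varphi$ and $(1-\varepsilon)\varphi$ into which any sufficiently fine rational approximation fits, and this automatically enforces the equality of multiplier ideals. The only small points to present carefully are the identity $\max_i |z_i|^{(1-\varepsilon) a_i} = (\max_i |z_i|^{a_i})^{1-\varepsilon}$, which relies on the fact that the same index realises both maxima, and the observation that the bounded $\log m$ discrepancy between $\log\max$ and $\log\sum$ is absorbed into the $O(1)$ in the definition of analytic singularity.
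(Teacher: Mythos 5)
Your proposal is correct and follows essentially the same route as the paper: approximate each $a_i$ by rationals in $((1-\varepsilon)a_i,a_i)$ increasing to $a_i$, note the resulting $\varphi_k$ have analytic singularities and decrease to $\varphi$, and use the sandwich $\varphi\le\varphi_k\le(1-\varepsilon)\varphi$ together with the hypothesis $\mathcal{I}(\varphi)_o=\mathcal{I}((1-\varepsilon)\varphi)_o$ to get equisingularity. (Only a cosmetic remark: the identity $\max_i|z_i|^{(1-\varepsilon)a_i}=(\max_i|z_i|^{a_i})^{1-\varepsilon}$ needs nothing about the arg-max, just that $t\mapsto t^{1-\varepsilon}$ is increasing and hence commutes with $\max$.)
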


\begin{proof}
Note that for any $a_{i}$ there exists rational numbers $a_{i,k}\in((1-\varepsilon)a_{i},a_{i})$ such that $\lim_{k\to+\infty}a_{i,k}=a_{i}$
holds for any $i\in\{1,\cdots,m\}$,
and increasing with respect to $k$.
Let $\varphi_{k}=\log\max_{1\leq i\leq m}|z_{i}|^{a_{i,k}}$.
Then the sequence $\{\varphi_{k}\}_{k}$ with analytic singularities near $o$ are decreasing convergent to $\varphi$ with respect to $k$ on $\{z:\max_{1\leq i\leq m}|z_{i}|<1\}$.

Note that $\varphi\leq \varphi_{k}\leq (1-\varepsilon)\varphi$ near $o$ for any $k$,
which implies
$$\mathcal{I}(\varphi)_{o}\subseteq\mathcal{I}(\varphi_{k})_{o}\subseteq\mathcal{I}((1-\varepsilon)\varphi)_{o}$$
holds for any $k$.
Then it follows from $\mathcal{I}(\varphi)_{o}=\mathcal{I}((1-\varepsilon)\varphi)_{o}$
that
$$\mathcal{I}(\varphi)_{o}=\mathcal{I}(\varphi_{k})_{o}$$
holds for any $k$.
Combining with the decreasing property of $\varphi_{k}$,
we obtain the present lemma.
\end{proof}

\section{Proof of Theorem \ref{thm:approx}}

We prove Theorem \ref{thm:approx} by two steps: sufficiency and necessity respectively.

\

Step 1. (Sufficiency).
It suffices to prove that $(2)$ implies that $\varphi$ has decreasing equisingular approximations with analytic singularities.
By Proposition \ref{pro:equiv_maximal},
it follows that (2) implies that there exists $\varepsilon\in(0,1)$ such that $\mathcal{I}(\varphi)_{o}=\mathcal{I}((1-\varepsilon)\varphi)_{o}$.
By Lemma \ref{lem:approxi}, we obtain that $(2)$ implies that $\varphi$ has decreasing equisingular approximations with analytic singularities.
Then the sufficiency has been proved.

\

Step 2. (Necessity). We prove by contradiction: if not, i.e. neither statement $(1)$ nor statement $(2)$ holds, and $\varphi$ has decreasing equisingular approximations with analytic singularities,
then it follows from Proposition \ref{pro:equiv_maximal} that $\varphi$ is a maximal weight near $o$.
By Remark \ref{rem:maximal_analytic}, it follows from $\varphi$ has decreasing equisingular approximations with analytic singularities that
$\varphi$ has analytic singularity near $o$, which contradicts the assumption that statement $(1)$ does not hold. Then the necessity has been proved.

\vspace{.1in} {\em Acknowledgements}. The author would like to
thank Professor Jean-Pierre Demailly, Professor Takeo Ohsawa and Professor Xiangyu Zhou for helpful discussions and encouragements.
The author would also like to
thank the hospitality of Beijing International Center for Mathematical Research.
The author was partially supported by by NSFC-11522101 and NSFC-11431013.

\bibliographystyle{references}
\bibliography{xbib}

\end{document}